\numberwithin{equation}{section}
\begin{document}

\title{Almost Yamabe Solitons on a Total Space of Almost Hermitian Submersions}
%\subtitle{Do you have a subtitle?\\ If so, write it here}

\titlerunning{Almost Yamabe Solitons on a Total Space of Almost Hermitian Submersions}        % if too long for running head

\author{Tanveer Fatima \and Mehmet Akif Akyol \and Rakesh Kumar}

%\authorrunning{Short form of author list} % if too long for running head

\institute{
Tanveer Fatima (Corresponding author) \at
Department of Mathematics and Statistics,\\
College of Sciences, Taibah University, Yanbu,\\
Kingdom of Saudi Arabia\\
\email{tansari@taibahu.edu.sa}
\and
Mehmet Akif Akyol \at
Department of Mathematics\\
Bingöl University, 12000 Bingöl, Turkey\\
\email {mehmetakifaykl@gmail.com}
\and
Rakesh Kumar \at
Department of Basic and Applied Sciences,\\
Punjabi University Patiala, 147 002, India\\
\email{dr$\_$rk37c@yahoo.co.in}
}

\date{Received: date / Accepted: date}
% The correct dates will be entered by the editor

\maketitle

\begin{abstract}
This article presents the study of almost Hermitian submersion from an almost Yamabe soliton onto an almost Hermitian manifold.  A non-trivial example is also mentioned in order to guarantee the existence of such solitons on the total space of almost Hermitian submersions. We mainly focus on Kaehler submersions from Kaehler manifolds which are special case of almost Hermitian submersions. Under certain conditions, we find out that the fibres and the base manifold of such submersions are almost Yamabe soliton. We give the characterizations for an almost Yamabe soliton of a Kaehler submersion to be shrinking, steady and expanding in terms of extrinsic horizontal scalar curvature.  Moreover, we observe the behavior of torqued, recurrent and concurrent vector field of the total space of the Kaehler submersion. In particular, we obtain characterization for an almost Yamabe soliton consisting of concurrent vector fields. Meanwhile, we give some results of such submersions  when the total space is Yamabe soliton which is a particular case of almost Yamabe soliton.

\keywords{Kaehler submersions;  Almost Hermitian submersions;  Almost Yamabe Soliton; Torqued vector field; Concurrent vector field.}
\subclass{53C15; 53C40; 53C50}
\end{abstract}

\section{Introduction}

Hamilton \cite{a13} defined the notion of Yamabe flow in order to solve the Yamabe problem. The importance of the notion that Yamabe solitons seem as singularity models which are self-similar solutions for Yamabe flows. Many authors has studied this notion intensively. (see \cite{a3}, \cite{a4}, \cite{a5}, \cite{uc}, \cite{a15}, \cite{a17}.)
Almost Yamabe solitons, which are the natural generalization of Yamabe solitons was given as follows \cite{a2};

 {\it{A Riemannian manifold $M$ is said to be an almost Yamabe soliton $(M, g, \eta, \mu)$ if there exists a vector
 field $V$ on $M$ which satisfies
 \begin{align}
 	 \frac{1}{2} \mathcal{L}_{\eta} g=(\tau-\mu) g \label{ays},
 \end{align}
 where $\tau$ is the scalar curvature of $M,~ \mu$, a smooth function, $\eta$, a soliton field for $(M, g)$
 and $\mathcal{L}$ is the Lie-derivative.}} Moreover, we say that {\it{an almost Yamabe soliton is steady, expanding or shrinking if $\mu=0, ~\mu<0$ or $\mu>0$, respectively.}}
 An almost Yamabe soliton is said to be the Yamabe soliton if $\mu$ is a constant in the definition (\ref{ays}). It is obvious that Einstein manifolds are almost Yamabe solitons.

On the other hand, the theory of Riemannian submersions between Riemannian manifolds were introduced by O'Neill \cite{a16} and Gray \cite{a9}, respectively. It has been extended intensively in the last three decades. (For the interested readers, we refer to \cite{a8F}, \cite{a9}, \cite{a16}, \cite{a19}). Later, Riemannian submersions were considered between almost complex manifolds by Watson \cite{a20} under the name of almost Hermitian submersions. He showed that under almost Hermitian submersions the base manifold takes the form as total space in most cases. Since then, Riemannian submersions have been used as an effective tool to describe the structure of a Riemannian manifold equipped with a differentiable structure.

Considering the above studies of almost Yamabe solitons and Riemannian submersions, the objective of the present paper is to study the almost Hermitian submersions whose total space is an almost Yamabe soliton which is also verified through an example. Basic tools of the mentioned theory are given in section 2. Section 3 essentially concerns with Keahler submersions admitting almost Yamabe soliton. Some observations are also mentioned while the potential vector field of almost Yamabe soliton is torqued, recurrent or concurrent vector field.

\section{Preliminaries}
This section often involve well known definitions and concepts of almost Hermitian submersions which are present in subsequest sections and can be found through the index.
\\

Let $(M_1,g_1)$ and $(M_2,g_{\text{\tiny$2$}})$ be
Riemannian manifolds, where $\dim(M_1)$ is greater than $\dim(M_2)$. A surjective mapping
$\pi:(M_1,g_1)\rightarrow(M_2,g_{2})$ is called a
\emph{Riemannian submersion}
\cite{a16} if\\
\text{(S1)} $\pi$ has maximal rank, and \\
\text{(S2)} $\pi_{*}$, restricted to $\ker\pi_{*}^{\bot},$ is a linear
isometry.\\

In this case, for each $q\in M_2$, $\pi^{-1}(q)$ is a $k$-dimensional
submanifold of $M_1$ and called a \emph{fiber}, where $k=\dim(M_1)-\dim(M_2).$
A vector field on $M_1$ is called \emph{vertical} (resp.
\emph{horizontal}) if it is always tangent (resp. orthogonal) to
fibers. A vector field $X$ on $M_1$ is called \emph{basic} if $X$ is
horizontal and $\pi$-related to a vector field $X_{*}$ on $M_2,$ i.e.,
$\pi_{*}X_{p}=X_{*\pi(p)}$ for all $p\in M_1.$ We will denote by
$\mathcal{V}$ and $\mathcal{H}$ the projections on the vertical
distribution $\ker\pi_{*}$, and the horizontal distribution
$\ker\pi_{*}^{\bot},$ respectively. As usual, the manifold $(M_1,g_1)$ is called \emph{total manifold} and the manifold $(M_2,g_{2})$ is called \emph{base manifold} of the submersion $\pi:(M_1,g_1)\rightarrow(M_2,g_{2})$.
The geometry of Riemannian submersions is characterized by O'Neill's tensors $\mathcal{T}$ and
$\mathcal{A}$, defined as follows:
\begin{equation}\label{tuv}
\mathcal{T}_{U}{V}=\mathcal{V}\nabla_{\mathcal{V}{U}}\mathcal{H}{V}+\mathcal{H}\nabla_{\mathcal{V}{U}}\mathcal{V}{V},
\end{equation}
\begin{equation}\label{auv}
\mathcal{A}_{U}{V}=\mathcal{V}\nabla_{\mathcal{H}{U}}\mathcal{H}{ V}+\mathcal{H}\nabla_{\mathcal{H}{U}}\mathcal{V}{V}
\end{equation}
for any vector fields ${U}$ and ${V}$ on $M_1,$ where $\nabla$ is the
Levi-Civita connection of $g_1$. It is easy to see
that $\mathcal{T}_{{U}}$ and $\mathcal{A}_{{U}}$ are skew-symmetric
operators on the tangent bundle of $M_1$ reversing the vertical and
the horizontal distributions. We now summarize the properties of the
tensor fields $\mathcal{T}$ and $\mathcal{A}$. Let $V, W$ be  vertical
and $X, Y$ be horizontal vector fields on $M_1$, then we have
\begin{equation}\label{t1vw}
\mathcal{T}_{V}W=\mathcal{T}_{W}V,
\end{equation}
\begin{equation}\label{a1xy}
\mathcal{A}_{X}Y=-\mathcal{A}_{Y}X=\frac{1}{2}\mathcal{V}[X,Y].
\end{equation}
On the other hand, from (\ref{t1vw}) and (\ref{a1xy}), we obtain
\begin{equation}\label{nvw}
\nabla_{V}W=\mathcal{T}_{V}W+\hat{\nabla}_{V}W,
\end{equation}
\begin{equation}\label{nvx}
\nabla_{V}X=\mathcal{T}_{V}X+\mathcal{H}\nabla_{V}X,
\end{equation}
\begin{equation}\label{nxv}
\nabla_{X}V=\mathcal{A}_{X}V+\mathcal{V}\nabla_{X}V,
\end{equation}
\begin{equation}\label{nxy}
\nabla_{X}Y=\mathcal{H}\nabla_{X}Y+\mathcal{A}_{X}Y,
\end{equation}
where $\hat{\nabla}_{V}W=\mathcal{V}\nabla_{V}W$. If $X$ is basic
\[\mathcal{H}\nabla_{V}X=\mathcal{A}_{X}V.\]
\begin{remark}\label{remark1}
Throughout the article, we have assumed all horizontal vector fields as basic vector fields.
\end{remark}
It is not difficult to observe that $\mathcal{T}$ acts on the fibers
as the second fundamental form while $\mathcal{A}$  acts on the
horizontal distribution and measures of the obstruction to the
integrability of this distribution. For  details on Riemannian
submersions, we refer to O'Neill's paper \cite{a16} and
the book \cite{a8F}. Furthermore, the O'Neill tensors $\mathcal{T}$ and $\mathcal{A}$ satisfy
\begin{align*}	
	\sum_{j=1}^{r} g_1\left(\mathcal{T}_{U} U_{j}, \mathcal{T}_{V} U_{j}\right)=\sum_{i=1}^{n} g_1\left(\mathcal{T}_{U} X_{i}, \mathcal{T}_{V} X_{i}\right) \\
	\sum_{j=1}^{r} g_1\left(\mathcal{A}_{X} U_{j}, \mathcal{A}_{Y} U_{j}\right)=\sum_{i=1}^{n} g_1\left(\mathcal{A}_{X} X_{i}, \mathcal{A}_{Y} X_{i}\right) \\
	\sum_{j=1}^{r} g_1\left(\mathcal{A}_{X} U_{j}, \mathcal{T}_{U} U_{j}\right)=\sum_{i=1}^{n} g_1\left(\mathcal{A}_{X} X_{i}, \mathcal{T}_{U} X_{i}\right),
\end{align*}
where $\left\{U_{j}\right\}_{1 \leq j \leq r}$ and $\left\{X_{i}\right\}_{1 \leq i \leq n}$ are orthonormal frames of vertical distribution $\mathcal{V}$ and horizontal
distribution $\mathcal{H}$, respectively, for any $X, Y \in \mathcal{H}$ and $U, V \in \mathcal{V}$.

Denote the Riemannian curvature tensors of $M_1, M_2$ and any fiber by $R^{\prime}, R$
and $\hat{R}$, respectively. Then, we get

\begin{align}
	R(U, V, F, W)&=\hat{R}(U, V, F, W)+g_1\left(\mathcal{T}_{V} F, \mathcal{T}_{U} W\right)-g_1\left(\mathcal{T}_{U} F, \mathcal{T}_{V} W\right) \label{uvfw},\\
	R(X, Y, G, Z)&=R^{\prime}\left(X^{\prime}, Y^{\prime}, G^{\prime}, Z^{\prime}\right) \circ \pi-g_1\left(\mathcal{A}_{Y} G, \mathcal{A}_{X} Z\right)\notag \\
	&+2 g_1\left(\mathcal{A}_{X} Y, \mathcal{A}_{G} Z\right)+g_1\left(\mathcal{A}_{X} G,  \mathcal{A}_{Y} Z\right)\label{x,y,g,z},
\end{align}
for any horizontal vectors $X, Y, G, Z$ and vertical vectors $U, V, F, W$.
Moreover, denoting the mean curvature vector of any fiber by $H$ which is given as
$N=r H,$ such that
\begin{equation}
    N=\sum_{j=1}^{r} \mathcal{T}_{V_{j}} V_{j},\label{n12}
\end{equation}
where $\left\{V_{1}, V_{2}, \ldots, V_{r}\right\}$ is an orthonormal frame of $\mathcal{V}$. Also, remark that
$$
\mathcal{T}_{U} V=g(U, V) H
$$
is satisfied for $U, V$ tangent to $\mathcal{V}$ if and only if the fiber is totally umbilical submanifold. Indeed, the vector field $N$ is zero on $M_1$ if and only if any fiber is minimal.
Using \ref{n12}, one has $$
g_1\left(\nabla_{E} N, Z\right)=\sum_{j=1}^{r} g_1\left(\left(\nabla_{E} \mathcal{T}\right)\left(U_{j}, U_{j}\right), Z\right),
$$
where $Z$ is any horizontal vector field and $E$, an arbitarary vector field $M_1$.\\

A $(1,1)-$ tensor field $J$ on an $2 n-$ dimensional smooth manifold $M$ is said to be an almost complex structure if $J^{2}=-I$. An almost Hermitian manifold $(M, g, J)$ is a smooth manifold endowed with an almost complex structure $J$ and a Riemannian metric $g$ compatible in the sense that
$$
g\left(J X_{1}, X_{2}\right)+g\left(X_{1}, J X_{2}\right)=0, \quad X_{1}, X_{2} \in \chi(M).
$$
The Kaehler form of the almost Hermitian manifold is defined by $\Phi\left(X_{1}, X_{2}\right)=g\left(X_{1}, J X_{2}\right)$. An almost Hermitian manifold is called Kaehler manifold if $\nabla J=0,$ where $\nabla$ is a Levi-Civita connection with respect to $g$. 

Now, we recall some basic notions about almost Hermitian submersions from (\cite{a8F}, \cite{a13J}, \cite{a20}).

Let $\left(M_{1}, g_{1}, J_{1}\right)$ and $\left(M_{2}, g_{2}, J_{2}\right)$ be almost Hermitian manifolds. Riemannian submersion $\pi: M_{1} \rightarrow M_{2}$ is said an almost Hermitian submersion if $\phi$ is an almost complex map, i.e. if $\pi_{*} \circ J_{1}=J_{2} \circ \pi_{*}$ holds. Moreover, an almost Hermitian submersion is called a Kaehler if the total space is Kaehler manifold. It is well known that the fibres and the base manifold of an almost Hermitian submersions belong to the same class as the total space.

Finally, for a given almost Hermitian submersion the scalar curvatures of total space, the base space and the fibres are related by

\begin{align}	
	\tau=\left(\tau^{\prime} \circ \pi\right)+\hat{\tau}-\|\mathcal{T}\|^{2} \label{mtau}
\end{align}
such that $\|\mathcal{T}\|^{2}=\sum_{i, j} g_1\left(\mathcal{T}_{U_{j}} X_{i}, \mathcal{T}_{U_{j}} X_{i}\right),$
where $\hat{\tau}$ and $\tau^{\prime}$ are the scalar curvatures of any fiber of $\pi$ and the base manifold respectively \cite{a8F}.

\section{Almost Hermitian submersions from an almost Yamabe soliton}

This section includes our investigation on the almost Hermitian submersion, in particular, the Kaehler submersion $\pi:(M_1, g_1, J_1)\to( M_{2}, g_{2}, J_{2})$ whose total space admits an almost Yamabe soliton.

We start with an example to provide a formal assurance of the existence of such solitons on the total manifold of almost Hermitian submersions.

\begin{example}
Let $\left(\mathbb{R}^{4}, J_{1}, g_{1}\right)$ be an almost Hermitian  manifold endowed with an almost complex structure $\left(J_{1}, g_{1}\right)$ which is given by
	$$
	\begin{aligned}
		&g_{1}=e^{-2 x_{3}} d x_{1}^{2}+e^{-2 x_{4}} d x_{2}^{2}+d x_{3}^{2}+d x_{4}^{2} \\
		&J_{1}\left(x_{1}, x_{2}, x_{3}, x_{3}\right)=\left(-x_{2}, x_{1},-x_{4}, x_{3}\right)
	\end{aligned}
	$$
	Let $\left(\mathbb{R}^{2}, J_{2}, g_{2}\right)$ be an almost Hermitian manifold endowed with an almost complex structure $\left(J_{2}, g_{2}\right)$, given by
	$$
	\begin{aligned}
		&g_{2}=e^{2 y_{2}} d y_{1}^{2}+d y_{2}^{2} \\
		&J_{2}\left(y_{1}, y_{2}\right)=\left(-y_{2}, y_{1}\right)
	\end{aligned}
	$$
	Let $\pi:\left(\mathbb{R}^{4}, J_{1}, g_{1}\right) \longrightarrow\left(\mathbb{R}^{2}, J_{2}, g_{2}\right)$ be defined by
	$$
	\pi\left(x_{1}, x_{2}, x_{3}, x_{4}\right)=\left(\frac{x_{1}-x_{3}}{\sqrt{2}}, \frac{x_{2}-x_{4}}{\sqrt{2}}\right)
	$$
	Then
	$$
	\operatorname{Ker} \pi_{*}=\operatorname{span}\left\{U_{1}=e_{1}+e_{3}, \quad U_{2}=e_{2}+e_{4}\right\}
	$$
	and 
	$$\left(\operatorname{Ker} \pi_*\right)^\perp=\operatorname{span}\left\{X_{1}=e_{1}-e_{3}, \quad X_{2}=e_{2}-e_{4}\right\}$$
	where $\left\{e_{1}=e^{x_{3}} \partial x_{1}, e_{2}=e^{x _ 4} \partial x_{2}, \quad e_{3}=\partial x_{3}, e_{4}=d x_{4}\right\}$ is the basis of $T\mathbb{R}^4$.  Then
	$$
	\pi_{*} X_{1}=\sqrt{2} e_{1}^{*}, \quad \pi_{*} X_{2}=\sqrt{2} e_{2}^{*}
	$$
	where $\left\{e_{1}^{*}=e^{-y_{2}} \partial y_{1}, e_{2}^{*}=\partial y_{2}\right\}$ is the basis of $T \mathbb{R}^{2}$. Then, clearly
	$$
	\begin{aligned}
		&g_{1}\left(X_{1}, X_{1}\right)=2=g_{2}\left(\pi_{*} X_{1}, \pi_{*} X_{1}\right) \\
		&g_{1}\left(X_{2}, X_{2}\right)=2=g_{2}\left(\pi_{*}  X_{2}, \pi_{*}  X_{2}\right)
	\end{aligned}
	$$
	Therefore $\pi$ is a Riemannian Submersion from $\mathbb{R}^{4}$ to $\mathbb{R}^{2}$. Moreover
	$$
	\begin{aligned}
		J_{1} X_{1} &=-e_{3}+e_{4} \quad, \quad J_{1} X_{2}=e_{1}-e_{3} \quad \text { then } \\
		\pi_{*} J_{1} X_{1}=-\sqrt{2} e_{2}^{*} \quad, \quad \pi_{*} J_{1} X_{2}=\sqrt{2} e_{1}^{*}
	\end{aligned}
	$$
	Also,
	$$
	J_{2} \pi_{*} X_{1}=-\sqrt{2} e_{2}^{*} \quad, \quad J_{2} \pi_{*} X_{2}=\sqrt{2} e_{1}^{*}
	$$
	Hence
	$$
	\pi_{*} J_{1} X_{1}=J_{2} \pi_{*} X_{1} \quad \text { and } \quad \pi_{*} J_{1} X_{2}=J_{2} \pi_{*} X_{2}
	$$
	Thus $\pi$ is an almost Hermitian submersion from $\left(\mathbb{R}^{4}, J_{1}, g_{1}\right)$ to $\left(\mathbb{R}^{2}, J_{2}, g_{2}\right)$. Further
	$$
	g_{1 i j}=\left[\begin{array}{cccc}
		e^{-2 x_{3}} & 0 & 0 & 0 \\
		0 & e^{-2 x^{4}} & 0 & 0 \\
		0 & 0 & 1 & 0 \\
		0 & 0 & 0 & 1
	\end{array}\right] \text {then } g_{1}^{i j}=\left[\begin{array}{cccc}
		e^{2 x_{3}} & 0 & 0 & 0 \\
		0 & e^{2 x_{4}} & 0 & 0 \\
		0 & 0 & 1 & 0 \\
		0 & 0 & 0 & 1
	\end{array}\right]
	$$
$$	\begin{aligned}
		&\text { Then }\\
		&\Gamma_{11}^{1}=\Gamma_{11}^{2}=\Gamma_{11}^{4}=0 \quad, \quad \Gamma_{11}^{3}=e^{-2 x_{3}}\\
		&\Gamma_{12}^{1}=0=\Gamma_{21}^{1}, \quad \Gamma_{12}^{2}=0=\Gamma_{21}^{2}, \Gamma_{12}^{3}=0 \Gamma_{21}^{3}, \quad \Gamma_{12}^{4}=0= \Gamma_{21}^{4}\\
		&\Gamma_{13}^{1}=-1=\Gamma_{31}^{1}, \quad \Gamma_{13}^{2}=0=\Gamma_{31}^{2}, \quad \Gamma_{13}^{3}=0 =\Gamma_{31}^{3}, \Gamma_{13}^{4}=0=\Gamma_{31}^{4}\\
		&\Gamma_{14}^{1}=0=\Gamma_{41}, \quad \Gamma_{14}^{2}=0=\Gamma_{41}^{2}, \Gamma_{14}^{3}=0=\Gamma_{41}^{3}, \Gamma_{14}^{4}=0=\Gamma_{41}^{4}\\
		&\Gamma_{22}^{1}=\Gamma_{22}^{2}=\Gamma_{22}^{3}=0, \Gamma_{22}^{4}=e^{-2 x_{4}}\\
		&\Gamma_{23}^{1}=0=\Gamma_{32}, \quad{ }_{23}^{2}=0=\Gamma_{32}^{2}, \Gamma_{23}^{3}=0=\Gamma_{32}^{3}, \Gamma_{23}^{4}=0=\Gamma_{23}^{4}\\
		&\Gamma_{24}^{1}=-1=\Gamma_{42}^{1}, \quad \Gamma_{24}^{2}=0=\Gamma_{42}^{2}, \quad \Gamma_{24}^{3}=0=\Gamma_{42}^{3}, \quad \Gamma_{24}^{4}=0=\Gamma_{42}^{4}\\
		&\Gamma_{33}^{1}=\Gamma_{33}^{2}=\Gamma_{33}^{3}=\Gamma_{33}^{4}=0 \\
		&\Gamma_{44}^{1}=\Gamma_{44}^{2}=\Gamma_{44}^{3}=\Gamma_{44}^{4}=0
	\end{aligned}
	$$
	Using these Christoffel symbols, we obtain
	$$\operatorname{Ric}\left(e_{1}, e_{1}\right)=1, \quad \operatorname{Ric}\left(e_{2}, e_{2}\right)=1, \quad \operatorname{Ric}\left(e_{3}, e_{3}\right)=1 ~\text{and}~ \operatorname{Ric}\left(e_{4}, e_{4}\right)=1$$
	Hence the scalar curvature $\tau$ is
	$$
	\tau=\sum_{i} \operatorname{Ric}\left(e_{i}, e_{i}\right)=4
	$$
	We also have
	$$	\begin{aligned}
	\nabla_{U_{1}} U_{1}&=-X_{1}, \quad  &\nabla_{U_{1}} U_{2}&=0, \quad  &\nabla_{U_{1}} X_{1}&=U_{1}, \quad &\nabla_{U_{1}} X_{2}&=0\\
	\nabla_{U_{2}} U_{1}&=0, \quad  &\nabla_{U_{2}} U_{2}&=-X_{2}, \quad  &\nabla_{U_{2}} X_{1}&=0, \quad &\nabla_{U_{2}} X_{2}&=U_{2}\\
	\nabla_{X_{1}} U_{1}&=-X_{1}, \quad  &\nabla_{X_{1}} U_{2}&=0, \quad  &\nabla_{X_{1}} X_{1}&=U_{1} , \quad &\nabla_{X_{1}} X_{2}&=0\\
	\nabla_{X_{2}} U_{1}&=0, \quad  &\nabla_{X_{2}} U_{2}&=-X_{2}, \quad  &\nabla_{X_{2}} X_{1}&=0, \quad &\nabla_{X_{2}} X_{2}&=U_{2}
		\end{aligned}
$$
	Since $T \mathbb{R}^{4}=\left(\operatorname{Ker} \pi_{*}\right)\oplus \left(\operatorname{Ker} \pi_{*}\right)^{\perp}$, therefore any vectors $Z_{1}, Z_{2}, Z_{3} \in T \mathbb{R}^{4}$ can be written as
	$$
	\begin{aligned}
		&Z_{1}=a_{1} U_{1}+b_{1} U_{2}+c_{1} X_{1}+d_{1} X_{2} \\
		&Z_{2}=a_{2} U_{1}+b_{2} U_{2}+c_{2} X_{1}+d_{2} X_{2} \\
		&Z_{3}=a_{3} U_{1}+b_{3} U_{2}+c_{3} X{1}+d_{3} X_{2}
	\end{aligned}
	$$
	where $a_{i}, b_{i}, c_{i}, d_{i} \in \mathbb{R}$, then
	\begin{equation}\label{eq1}
	g_{1}\left(Z_{2}, Z_{3}\right)=2\left(a_{2} a_{3}+b_{2} b_{3}+c_{2} c_{3}+d_{2} d_{3}\right)
	\end{equation}
	We know that
	\begin{align}\nonumber
		\frac{1}{2}\left(\mathcal{L}_{Z_1}g_{1}\right)\left(Z_{2}, Z_{3}\right)&=\frac{1}{2}\left[g_{1}\left(\nabla_{Z_{2}} Z_{1}, Z_{3}\right)+g_{1}\left(\nabla_{Z_{3}} Z_{1}, Z_{2}\right)\right] \\ \nonumber
		&=c_{1} a_{2}\left(2 a_{3}-c_{3}\right)+a_{1} b_{2}\left(2 b_{3}+d_{3}\right)-a_{1} c_{2}\left(2 c_{3}+a_{3}\right) \\ \label{eq2}
		\quad ~&-b_{1} d_{2}\left(2 d_{3}+b_{3}\right)-a_{1} a_{2} c_{3}-b_{1} b_{2} d_{3}+c_{1} c_{2} a_{3}+d_{1} d_{2} b_{3}
	\end{align}
	Using (\ref{eq1}) and (\ref{eq2}), the following expression
	$$
	\frac{1}{2}\left(\mathcal{L} _{Z_{1}} g_{1}\right)\left(Z_{2}, Z_{3}\right)=(\tau-\lambda) g_{1}\left(Z_{2}, Z_{3}\right)
	$$
	becomes
	$$\begin{aligned}
		& c_{1} a_{2}\left(2 a_{3}-c_{3}\right)+d_{1} b_{2}\left(2 b_{3}+d_{3}\right)-a_{1} c_{2}\left(2 c_{3}+a_{3}\right)-b_{1} d_{2}\left(2 d_{3}+b_{3}\right) \\
		& -a_{1} a_{2} c_{3}-b_{1} b_{2} d_{3}+c_{1} c_{2} a_{3}+d_{1} d_{2} b_{3}=(4-\lambda) 2\left(a_{2} a_{3}+b_{2} b_{3}+c_{2} c_{3}+d_{2} d_{3}\right)
	\end{aligned}$$
		
	Since $a_{i}, b_{i}, c_{i}, d_{i} \in \mathbb{R}$, hence $\lambda$ is constant and $\left(\mathbb{R}^{4}, J_{1}, g_{1}\right)$ becomes Yamabe soliton. Moreover, for some suitable choices of $a_{i}, b_{i}, c_{i}$ and $d_{i}$, the Yamabe soliton $\left(\mathbb{R}^{4}, J_{1}, g_{1}\right)$ will be shrinking, steady or expanding according to $\lambda<0, \lambda=0$ or $\lambda>0$, respectively.

\end{example}

Now we mention some theorems and corollaries for any fibre of the submersion and the base space to be an almost Yamabe soliton and a Yamabe soliton.

First we recall the following:

\begin{lemma}\label{lfm}\cite{a8} Following are the statements which are equivalent to each other for a Riemannian submersion $\pi$ from a Riemannian manifold $(M_1,g_1)$ onto a Riemannian manifold $(M_2,g_2)$
\begin{itemize}
    \item [(i)] The horizontal distribution $\mathcal H$ is parallel with respect to $\nabla$ on $M$.
    \item [(ii)] The vertical distributiuin $\mathcal V$ is parallel with respect to $\nabla$ on $M$.
    \item [(iii)] The fundamental tensors $\mathcal T$ and $\mathcal A$ vanishes identically.\\
\end{itemize} 
\end{lemma}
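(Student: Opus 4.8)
The plan is to prove the equivalence of the three statements by establishing a cycle of implications, relying on the structure equations (\ref{nvw})--(\ref{nxy}) and the defining formulas (\ref{tuv}), (\ref{auv}) for the O'Neill tensors. The central observation is that parallelism of a distribution with respect to $\nabla$ is precisely the condition that $\nabla$ restricted to that distribution stays inside it, and the O'Neill tensors are exactly the components of $\nabla$ that move vectors between the two distributions. First I would prove $(i)\Rightarrow(iii)$: assuming $\mathcal{H}$ is parallel, for any horizontal $X$ and arbitrary $U$ the covariant derivative $\nabla_U X$ remains horizontal, so its vertical part vanishes; reading off (\ref{nvx}) this forces $\mathcal{T}_V X = 0$ and from (\ref{nxy}) together with (\ref{a1xy}) one gets $\mathcal{A}_X Y = 0$. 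Since $\mathcal{T}$ is skew-symmetric reversing the distributions, vanishing of $\mathcal{T}_V X$ for all horizontal $X$ also yields $\mathcal{T}_V W = 0$, so $\mathcal{T} \equiv 0$ and $\mathcal{A} \equiv 0$.

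Next I would establish $(iii)\Rightarrow(ii)$: if $\mathcal{T}$ and $\mathcal{A}$ vanish identically, then (\ref{nvw}) gives $\nabla_V W = \hat\nabla_V W$, which is vertical, and (\ref{nxv}) gives $\nabla_X V = \mathcal{V}\nabla_X V$, again vertical. Hence for every vector field $E$ on $M_1$ the derivative $\nabla_E V$ stays vertical whenever $V$ is vertical, which is exactly the statement that $\mathcal{V}$ is parallel. Finally $(ii)\Rightarrow(i)$ follows by the same skew-symmetry argument: parallelism of $\mathcal{V}$ forces the vertical components in (\ref{nxv}) and (\ref{nvx}) to absorb everything, so $\mathcal{A}_X V$ and $\mathcal{T}_V X$ vanish; because $\mathcal{T}_U$ and $\mathcal{A}_U$ reverse the two distributions and are skew-symmetric, the horizontal analogues $\mathcal{H}\nabla_X Y$ and $\mathcal{H}\nabla_V W$ then give $\nabla_E X$ horizontal for horizontal $X$, i.e.\ $\mathcal{H}$ is parallel. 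This closes the cycle $(i)\Rightarrow(iii)\Rightarrow(ii)\Rightarrow(i)$, which proves mutual equivalence.

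The main obstacle I anticipate is organizing the skew-symmetry bookkeeping cleanly rather than any deep difficulty. The tensors $\mathcal{T}$ and $\mathcal{A}$ have several components acting in both directions (vertical-to-horizontal and horizontal-to-vertical), and one must be careful to argue that vanishing of one directional component forces the others. The key lever here is the stated fact that $\mathcal{T}_U$ and $\mathcal{A}_U$ are skew-symmetric operators reversing $\mathcal{V}$ and $\mathcal{H}$: for instance $g_1(\mathcal{T}_V X, W) = -g_1(X, \mathcal{T}_V W)$, so once the $\mathcal{T}_V X = 0$ piece vanishes the $\mathcal{T}_V W$ piece vanishes as well. As long as each implication invokes this reversal property explicitly to pass between the two directional components, the proof is a direct and short application of the decomposition formulas.
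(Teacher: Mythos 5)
The paper itself offers no proof of this lemma: it is recalled verbatim from Eken Meri\c{c} \cite{a8} and used as a black box, so there is no in-paper argument to compare yours against, and supplying one is worthwhile. Your cyclic proof $(i)\Rightarrow(iii)\Rightarrow(ii)\Rightarrow(i)$ is correct in substance and is the standard argument: parallelism of a distribution kills exactly those components of $\nabla$ that the O'Neill tensors encode in the decompositions (\ref{nvw})--(\ref{nxy}), and the skew-symmetry identity $g_1(\mathcal{T}_V X, W) = -g_1(X, \mathcal{T}_V W)$ (with its analogue for $\mathcal{A}$) transfers vanishing of one directional component to the other, since each tensor reverses $\mathcal{V}$ and $\mathcal{H}$. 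One step needs repair, however: in $(ii)\Rightarrow(i)$ you claim that parallelism of $\mathcal{V}$ forces $\mathcal{T}_V X = 0$ by reading off (\ref{nvx}); this is a non sequitur, because (\ref{nvx}) decomposes $\nabla_V X$ for a \emph{horizontal} $X$, about which $\mathcal{V}$-parallelism says nothing directly. The correct route, consistent with the lever you state in your final paragraph, is: $\mathcal{V}$-parallelism forces the horizontal part $\mathcal{T}_V W$ in (\ref{nvw}) and the horizontal part $\mathcal{A}_X V$ in (\ref{nxv}) to vanish; skew-symmetry then gives $\mathcal{T}_V X = 0$ and $\mathcal{A}_X Y = 0$; finally (\ref{nvx}) and (\ref{nxy}) show that $\nabla_E X$ is horizontal for every horizontal $X$, i.e.\ $\mathcal{H}$ is parallel. (The same small gap appears, more mildly, in $(i)\Rightarrow(iii)$, where $\mathcal{A}_X V = 0$ should be stated explicitly via skew-symmetry from $\mathcal{A}_X Y = 0$ before concluding $\mathcal{A}\equiv 0$.) Worth noting as a shortcut: $(i)\Leftrightarrow(ii)$ follows in one line from metric compatibility, since $g_1(\nabla_E X, V) = -g_1(X, \nabla_E V)$ whenever $g_1(X,V)=0$ and $\mathcal{H}=\mathcal{V}^{\perp}$; with that observation you would only need to prove $(ii)\Leftrightarrow(iii)$, halving the bookkeeping you were worried about.
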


\begin{theorem} \label{T1}
	Let $(M_1, g_1, J_1)$ be an almost Yamabe soliton with the potential vector field  $\eta$ and $\pi:(M_1, g_1, J_1)\to( M_{2}, g_{2}, J_{2})$
	be a Kaehler  submersion. Then the totally geodesic fibres of the submersion are almost Yamabe solitons if the potential vector field $\eta$ is vertical.
\end{theorem}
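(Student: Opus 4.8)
The plan is to restrict the defining identity \eqref{ays} for $(M_1,g_1)$ to vector fields tangent to a fixed fibre $\pi^{-1}(q)$ and to show that, under the hypotheses that $\eta$ is vertical and the fibres are totally geodesic, each quantity in \eqref{ays} reduces to its intrinsic analogue on the fibre. Denoting by $\hat g_1$ the metric induced on $\pi^{-1}(q)$ and by $\hat{\mathcal L}$ the Lie derivative computed within the fibre, the objective is an identity of the form $\tfrac12\hat{\mathcal L}_\eta\hat g_1=(\hat\tau-\hat\mu)\hat g_1$ for an appropriate smooth function $\hat\mu$.

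First I would note that the hypothesis of totally geodesic fibres is equivalent to the global vanishing $\mathcal T\equiv0$: it directly gives $\mathcal T_VW=0$ for all vertical $V,W$ (the second fundamental form of a fibre), and the skew-symmetry of $\mathcal T_V$ recorded in the preliminaries then yields $g_1(\mathcal T_VX,W)=-g_1(X,\mathcal T_VW)=0$ for horizontal $X$, whence $\mathcal T_VX=0$ as well. Consequently, for vertical $V,W$ I expand
\[
\tfrac12(\mathcal L_\eta g_1)(V,W)=\tfrac12\big(g_1(\nabla_V\eta,W)+g_1(\nabla_W\eta,V)\big),
\]
and use \eqref{nvw} together with the verticality of $\eta$ to write $\nabla_V\eta=\mathcal T_V\eta+\hat\nabla_V\eta=\hat\nabla_V\eta$. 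Since this is vertical, pairing it with $W$ gives $\hat g_1(\hat\nabla_V\eta,W)$, and symmetrically for the other term, so that $(\mathcal L_\eta g_1)(V,W)=(\hat{\mathcal L}_\eta\hat g_1)(V,W)$; the left-hand side of \eqref{ays} thus descends to the fibre verbatim.

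For the right-hand side I would substitute $\mathcal T\equiv0$ into the curvature identity \eqref{mtau}, so that $\|\mathcal T\|^2=0$ and $\tau=(\tau'\circ\pi)+\hat\tau$. Plugging this into \eqref{ays} restricted to $V,W$ and using $g_1(V,W)=\hat g_1(V,W)$ produces
\[
\tfrac12(\hat{\mathcal L}_\eta\hat g_1)(V,W)=\big(\hat\tau-(\mu-\tau'\circ\pi)\big)\hat g_1(V,W).
\]
Restricted to the single fibre $\pi^{-1}(q)$, the map $\pi$ is constant, so $\tau'\circ\pi\equiv\tau'(q)$ and $\hat\mu:=\mu|_{\pi^{-1}(q)}-\tau'(q)$ is a genuine smooth function on the fibre. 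This is precisely the almost Yamabe soliton equation \eqref{ays} for $(\pi^{-1}(q),\hat g_1,\hat\mu)$ with potential field $\eta$, proving that every fibre is an almost Yamabe soliton.

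The step I expect to scrutinize most is the identification of the ambient Lie derivative with the intrinsic one: it relies jointly on $\eta$ being tangent to the fibre (from verticality) and on $\nabla_V\eta$ having no horizontal component (from $\mathcal T\equiv0$ via \eqref{nvw}). Everything else amounts to substitution, and it is worth noting that the Kaehler assumption enters only through the preliminaries---ensuring the fibres are themselves Kaehler and that the scalar curvatures satisfy \eqref{mtau}---rather than through any direct manipulation of $J_1$.
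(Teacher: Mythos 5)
Your proof is correct and follows essentially the same route as the paper's: restrict \eqref{ays} to vertical fields, use the verticality of $\eta$ together with \eqref{nvw} to replace $\nabla$ by $\hat{\nabla}$ in the pairings, and use \eqref{mtau} with $\mathcal{T}=0$ to pass from $\tau$ to $\hat{\tau}$. The one point where you improve on the paper is the base-curvature term: the paper's proof replaces $\tau-\mu$ directly by $\hat{\tau}-\mu$, silently discarding $\tau'\circ\pi$, whereas you correctly absorb it into the redefined soliton function $\hat{\mu}=\mu|_{\pi^{-1}(q)}-\tau'(q)$, which is constant-shifted fibrewise and is exactly what makes the conclusion rigorous.
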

\begin{proof} 
	Since $(M_1, g_1, J_1$ be an almost Yamabe soliton, then from (\ref{ays}), we have
	\begin{align*}
		\frac{1}{2}(\mathcal{L}_{\eta} g_{1})(U, V)&=(\tau-\mu) g_{1}(U, V)\\
		\frac{1}{2}\{g_{1}(\nabla^1_U{\eta}, V)+g_{1}(\nabla_{V}^{1} \eta, U)\}&=(\tau-\mu) g_{1}(U, V).
	\end{align*}	
for any vertical vector fields $U, V$ on $M_1$. Since the potential vector field $\eta$ is vertical, by using (\ref{nvw}) we get,\\
	
	$\dfrac{1}{2}\{g_{1}(\hat{\nabla}_{U} \eta, V)+g_{1}(\hat{\nabla}_{V} \eta, U)\}=(\tau-\mu ) g_{1}\left(U,V\right).$\\
	By using (\ref{mtau}) and the assumption that the fibres of the submersion are totally geodesic i.e., the second fundamental form of fibres $\mathcal T=0$
	\[\dfrac{1}{2}\{g_{1}(\hat{\nabla}_{U} \eta, V)+g_{1}(\hat{\nabla}_{V} \eta, U)\}=({\hat\tau}-\mu){g}_1(U,V).\]
	In view of the relation (\ref{ays}), above equation yields that 
		\[\dfrac{1}{2}(\mathcal{L}_{\eta} {g}_{1}) (U,V)=({\hat\tau}-\mu){g}_1(U,V),\]
which shows that the fibers of the submersion $\pi$ are almost Yamabe solitons with the potential vector field $\eta$.\\
\end{proof}

We remark that an almost Yamabe soliton is said to be Yamabe soliton if $\mu$ is constant.
	
As a consequence of the above theorem, we have  

\begin{corollary}\label{C1}
	Let $\pi$ be a Kaehler submersion from a Kaehler  manifold $\left(M_1, g_{1}, J_{1}\right)$ admitting
	an almost Yamabe soliton to a Kaehler manifold $\left(M_{2}, g_{2}, J_{2}\right)$. Then the totally geodesic fibres of the submersion are Yamabe soliton if the potential vector field is vertical.	
\end{corollary}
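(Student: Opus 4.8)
The plan is to read the statement as a refinement of Theorem \ref{T1} rather than a fresh computation. That theorem already exhibits each totally geodesic fibre as an almost Yamabe soliton with potential field $\eta$ and equation $\frac{1}{2}\mathcal{L}_{\eta}\hat{g}=(\hat{\tau}-\mu)\hat{g}$, so by the convention recalled just after Theorem \ref{T1} (an almost Yamabe soliton is a Yamabe soliton exactly when its soliton function is constant), it suffices to prove that $\mu$ is \emph{constant} along each fibre. The whole argument will hinge on the one hypothesis not yet used, namely that $\eta$ is vertical, and I intend to extract its consequences by testing the defining equation (\ref{ays}) on horizontal pairs.

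First I would substitute two horizontal basic fields $X,Y$ into (\ref{ays}) on $M_1$, writing $\frac{1}{2}(\mathcal{L}_{\eta}g_1)(X,Y)=\frac{1}{2}\{g_1(\nabla_X\eta,Y)+g_1(\nabla_Y\eta,X)\}$. Since $\eta$ is vertical, (\ref{nxv}) gives $\nabla_X\eta=\mathcal{A}_X\eta+\mathcal{V}\nabla_X\eta$, and only the horizontal part $\mathcal{A}_X\eta$ pairs nontrivially with $Y$. Using the skew-symmetry of $\mathcal{A}_X$ and the identity $\mathcal{A}_XY=-\mathcal{A}_YX$ from (\ref{a1xy}), the two terms reduce to $-g_1(\eta,\mathcal{A}_XY)-g_1(\eta,\mathcal{A}_YX)=0$. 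Hence the horizontal-horizontal block of $\frac{1}{2}\mathcal{L}_{\eta}g_1$ vanishes identically, and (\ref{ays}) forces $(\tau-\mu)g_1(X,Y)=0$ for all horizontal $X,Y$; taking $X=Y$ nonzero yields $\mu=\tau$ on $M_1$. Equivalently, $\mathcal{L}_{\eta}g_1=0$, so $\eta$ is a Killing field on the total space.

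Next I would restrict to a fibre $F=\pi^{-1}(q)$. Because the fibres are totally geodesic ($\mathcal{T}=0$) and $\eta$ is tangent to $F$, the Killing condition descends to $(F,\hat{g})$, giving $\frac{1}{2}\mathcal{L}_{\eta}\hat{g}=0$. Comparing this with the fibre equation of Theorem \ref{T1} collapses the latter to $(\hat{\tau}-\mu)\hat{g}=0$, so on $F$ the soliton function is pinned to the intrinsic scalar curvature, $\mu|_F=\hat{\tau}$. Feeding $\mu=\tau$ into the curvature relation (\ref{mtau}) under $\mathcal{T}=0$, namely $\tau=(\tau'\circ\pi)+\hat{\tau}$, I also get $\tau'\circ\pi=0$ on $F$, which is consistent with $\mu|_F=\hat{\tau}$ and isolates the remaining issue purely to the fibre's own geometry.

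The main obstacle is the final inch: upgrading ``$\mu|_F=\hat{\tau}$ with $\eta|_F$ Killing'' to ``$\mu|_F$ is constant.'' A Killing field preserves scalar curvature, so $\eta(\hat{\tau})=0$, but this only gives constancy of $\mu|_F$ along the flow of $\eta$, not across the whole fibre. I expect to close the gap either by invoking that a Yamabe soliton with Killing potential field necessarily has constant scalar curvature (forcing $\mu|_F=\hat{\tau}$ to be the constant soliton value), or by exploiting the Kaehler and totally geodesic structure of $F$ to exclude a non-constant $\hat{\tau}$. Everything preceding this is formal manipulation of the O'Neill tensors; the curvature-constancy step is where the genuine work lies, and I regard it as the crux of the corollary.
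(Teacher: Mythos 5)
Your opening computation is correct and even sharpens the paper: testing (\ref{ays}) on horizontal pairs with $\eta$ vertical, the skew-symmetry of $\mathcal{A}_X$ together with $\mathcal{A}_XY=-\mathcal{A}_YX$ does kill the horizontal--horizontal block of $\mathcal{L}_\eta g_1$, forcing $\mu=\tau$ pointwise and hence $\mathcal{L}_\eta g_1=0$, i.e.\ the soliton is trivial and $\eta$ is Killing. But this triviality is exactly why your final step cannot be closed, and the gap you honestly flag is not a missing lemma --- it is unfixable under your literal reading of the hypothesis. Your fallback, ``a Yamabe soliton with Killing potential field has constant scalar curvature,'' is circular here: constancy of the soliton function is precisely what you are trying to prove, and for an \emph{almost} Yamabe soliton a Killing (even identically zero) vertical field realizes $\mu=\tau$ with no constraint whatsoever on $\tau$. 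Concretely, take the product Kaehler submersion $F\times B\to B$ with $F$ a Kaehler manifold of non-constant scalar curvature and $\eta=0$ (or any vertical Killing field of $F$): the total space is an almost Yamabe soliton with $\mu=\tau$, the fibres are totally geodesic, $\eta$ is vertical, yet $\mu|_F=\hat{\tau}$ is non-constant, so the fibres are almost Yamabe solitons but not Yamabe solitons. The literal statement is therefore false, and no manipulation of the O'Neill tensors will rescue it.

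What the paper intends --- and all it actually does --- is much less: the remark inserted immediately before the corollary (``an almost Yamabe soliton is said to be Yamabe soliton if $\mu$ is constant'') signals that the hypothesis should be read, as in the twin corollary following Theorem~\ref{T2}, with the total space a \emph{Yamabe} soliton, i.e.\ $\mu$ constant. Then the corollary is immediate from Theorem~\ref{T1}: the fibre equation $\tfrac{1}{2}\mathcal{L}_\eta\hat{g}=(\hat{\tau}-\mu)\hat{g}$ (more precisely, with soliton function $\mu-\tau'\circ\pi$ once (\ref{mtau}) is applied carefully, which is still constant on each fibre since $\pi$ collapses the fibre to a point) has constant soliton function, so each fibre is a Yamabe soliton. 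No curvature-constancy argument is made or needed in the paper; your proposal substitutes a genuinely harder (and, as shown, impossible) problem for a one-line specialization, although your observation that a vertical potential field forces the total-space soliton to be trivial is a worthwhile remark in its own right.
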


\begin{theorem}\label{T2}
	Let $\pi:(M_1, g_1, J_1)\to( M_{2}, g_{2}, J_{2})$ be a Kaehler submersion admitting an almost Yamabe soliton with the potential vector field $\eta$ and a smooth function $\mu$ such that the potential vector field is horizontal. If the second fundamental form of fibres $\mathcal {T}$ vanishes then the base manifold $(M_2,g_2,J_2)$ is also an almost Yamabe soliton.
\end{theorem}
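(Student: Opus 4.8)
The plan is to run the argument of Theorem \ref{T1} ``horizontally'' rather than vertically. Since the potential field $\eta$ is horizontal and, by Remark \ref{remark1}, every horizontal field is basic, I would first evaluate the almost Yamabe soliton equation (\ref{ays}) on a pair of basic horizontal vector fields $X,Y$:
\[
\tfrac12\{g_1(\nabla^1_X\eta,Y)+g_1(\nabla^1_Y\eta,X)\}=(\tau-\mu)\,g_1(X,Y).
\]
The first step is to decompose each covariant derivative with (\ref{nxy}), writing $\nabla^1_X\eta=\mathcal H\nabla^1_X\eta+\mathcal A_X\eta$. Because $X$ and $\eta$ are both horizontal, (\ref{auv}) (equivalently (\ref{a1xy})) shows that $\mathcal A_X\eta=\tfrac12\mathcal V[X,\eta]$ is \emph{vertical}, so pairing it against the horizontal field $Y$ gives zero; the same holds after interchanging $X$ and $Y$. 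Hence only the horizontal parts survive and the left-hand side reduces to $\tfrac12\{g_1(\mathcal H\nabla^1_X\eta,Y)+g_1(\mathcal H\nabla^1_Y\eta,X)\}$. Note that at this stage the hypothesis $\mathcal T=0$ has not yet been used; it enters only through the scalar curvature.

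The second step is to push this expression down to the base. For basic fields the standard O'Neill relation gives that $\mathcal H\nabla^1_XY$ is $\pi$-related to $\nabla^2_{X_*}Y_*$, where $\nabla^2$ is the Levi-Civita connection of $g_2$ and $X_*=\pi_*X$; together with the isometry property (S2) this converts the surviving terms into $g_2(\nabla^2_{X_*}\eta_*,Y_*)\circ\pi$ and its symmetric partner, with $\eta_*=\pi_*\eta$. Thus the left-hand side becomes $\tfrac12(\mathcal L_{\eta_*}g_2)(X_*,Y_*)\circ\pi$, while the right-hand side is $(\tau-\mu)\,g_2(X_*,Y_*)\circ\pi$ by (S2). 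The third step invokes (\ref{mtau}) with $\mathcal T=0$, which yields $\tau=(\tau'\circ\pi)+\hat\tau$, so that $\tau-\mu=\tau'\circ\pi-(\mu-\hat\tau)$. Writing $\mu'$ for the descent of $\mu-\hat\tau$ produces exactly
\[
\tfrac12(\mathcal L_{\eta_*}g_2)(X_*,Y_*)=(\tau'-\mu')\,g_2(X_*,Y_*),
\]
which is (\ref{ays}) for $(M_2,g_2,J_2)$ and identifies it as an almost Yamabe soliton with potential $\eta_*$.

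I expect the main obstacle to be the descent step rather than the tensor bookkeeping: one must justify that $\mathcal H\nabla^1_X\eta$ is genuinely $\pi$-related to $\nabla^2_{X_*}\eta_*$ (this is where ``basic'' and (S2) are essential), and, for the conclusion to be a statement on $M_2$, that the function $\mu-\hat\tau$ is constant along the fibres so that it descends to a smooth $\mu'$. In the totally geodesic setting this projectability follows along the lines of Lemma \ref{lfm}; following the convention already adopted in Theorem \ref{T1}, one may instead absorb $\hat\tau$ and retain the same soliton function $\mu$, so that the base simply inherits (\ref{ays}) with $\tau$ replaced by the base scalar curvature $\tau'$.
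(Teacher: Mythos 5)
Your proposal is correct and follows essentially the same route as the paper: restrict (\ref{ays}) to a pair of horizontal (basic) fields, discard the $\mathcal{A}$-terms via (\ref{nxy}), use the $\pi$-relatedness of $\mathcal{H}\nabla^1_X\eta$ with $\nabla^2_{X_*}\eta_*$, and substitute (\ref{mtau}) with $\mathcal{T}=0$ to land on the soliton equation for $(M_2,g_2,J_2)$. If anything, your bookkeeping is more careful than the paper's own: the paper substitutes (\ref{mtau}) with the signs of $\hat\tau$ and $\|\mathcal{T}\|^2$ interchanged and then silently drops $\hat\tau$, concluding with the \emph{same} function $\mu$ on the base, whereas you correctly retain the fibre scalar curvature in the modified soliton function $\mu'=\mu-\hat\tau$ and explicitly flag the projectability requirement (that $\mu-\hat\tau$ be constant along fibres so it descends to $M_2$) --- a genuine point that the paper's proof never addresses.
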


\begin{proof}  Since $(M_1,g_1,J_1)$ is an almost Yamabe soliton, then from (\ref{ays}), we get
$$	\frac{1}{2} \{ g_{1}\left(\nabla_{X} \eta, Y\right)+g_{1}\left(\nabla_{Y} \eta, X\right)\}=(\tau-\mu) g_{1}\left(X, Y\right)$$
for any horizontal vector fields $X$ and $Y$ on $M_1$. By simply using (\ref{nxy}) and (\ref{mtau}) above equation reaches to
$\frac{1}{2}\left\{g_{1}\left(\mathcal{H}\left(\nabla_{X} \eta\right), Y\right)+g(\mathcal{H}(\nabla_Y \eta), X)\right\}=\{(\tau^\prime\circ \pi)+\|\mathcal {T}\|^2-\hat{\tau}-\mu\}g(X,Y)$.\\

Since $\mathcal T=0$ and also we note  that $\mathcal{H}\left(\nabla_{X}\eta\right)$ and $\mathcal{H}(\nabla_Y\eta)$ are $\pi$ -related to ${\nabla}^{2}_{X_*}\eta_*$ and ${\nabla}^{2}_{Y_*}\eta_*$, respectively. It follows that
\begin{equation}\label{E2}
	\frac{1}{2}\left\{g_{2}(\nabla_{X_*}^2 \eta_*, Y_*)+g_2(\nabla_{Y_*}^2 \eta_*, X_*)\right\}=\{(\tau^\prime\circ \pi_*)-\mu\}g_2(X_*,Y_*)\circ \pi_*,
\end{equation}
where the vector fields $X_*, Y_*$ are tangent to $(M_2,g_2,J_2)$ and $\eta_*$ is potential vector field on $M_2$, which is $\pi$-related to $eta$ on $M_1$.  By using (\ref{ays}) in (\ref{E2}), it shows that the base manifold $(M_2,g_2,J_2)$ is almost Yamabe soliton with the potential vector field $\eta_*$ and a function $\mu$. 
\end{proof}
As a conclusion of the above theorem, we state the following corollary;  
\begin{corollary}\label{C2}
	Let $\pi:(M_1, g_1, J_1)\to( M_{2}, g_{2}, J_{2})$ be a Kaehler submersion admitting Yamabe soliton with the potential vector field $\eta$ and a smooth function $\mu$ such that the potential vector field is horizontal. If the second fundamental form of fibres $\mathcal {T}$ vanishes then the base manifold $(M_2,g_2,J_2)$ is also a Yamabe soliton.
\end{corollary}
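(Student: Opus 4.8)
The plan is to obtain this corollary as an immediate specialization of Theorem \ref{T2}. Recall from the remark preceding Corollary \ref{C1} that a Yamabe soliton is exactly an almost Yamabe soliton whose function $\mu$ is constant. Hence the hypotheses of Theorem \ref{T2} are met: $(M_1,g_1,J_1)$ is an almost Yamabe soliton with horizontal potential field $\eta$ and (constant) smooth function $\mu$, and the second fundamental form $\mathcal T$ of the fibres vanishes. The first step is therefore to apply Theorem \ref{T2} directly, yielding that $(M_2,g_2,J_2)$ is an almost Yamabe soliton with potential field $\eta_*$ and the \emph{same} function $\mu$, as recorded in equation (\ref{E2}).

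The only additional point to verify is that the constant nature of $\mu$ descends to the base. Since $(M_1,g_1,J_1)$ is a Yamabe soliton, $\mu$ is constant on all of $M_1$, so in particular it is constant along each fibre and pushes forward to a well-defined constant function on $M_2$; this uses only that $\pi$ is a surjective submersion. Inserting this constant into (\ref{E2}) and comparing with the defining relation (\ref{ays}) shows that the soliton equation on $M_2$ carries a constant coefficient, which is precisely the definition of a Yamabe soliton.

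There is essentially no genuine obstacle here. The one place demanding a moment's care is confirming that the function appearing on the base is literally $\mu$ and not some new function of the base variables; this is guaranteed by the derivation of (\ref{E2}) in the proof of Theorem \ref{T2}, where the fibre terms $\|\mathcal T\|^2-\hat\tau$ vanish under $\mathcal T=0$ and only $(\tau'\circ\pi)-\mu$ survives on the right-hand side. Consequently the corollary follows with no computation beyond the observation that a constant remains constant under pushforward by $\pi$.
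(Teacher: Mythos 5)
Your proposal is correct and matches the paper's intent exactly: the paper states this corollary as an immediate consequence of Theorem \ref{T2} with no separate argument, and your specialization (a Yamabe soliton is an almost Yamabe soliton with constant $\mu$, the constant $\mu$ trivially descends to the base, so equation (\ref{E2}) becomes a Yamabe soliton equation on $M_2$) is precisely the reasoning the paper leaves implicit. Your added remark that constancy of $\mu$ is what makes the pushforward of $\mu$ to $M_2$ unproblematic is a worthwhile clarification, since for a general non-constant $\mu$ this descent would require $\mu$ to be constant along fibres.
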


\section{Characterizations on total space of Almost Hermitian submersions}
This section gives the characterization on almost Yamabe soliton by investigating the the relation between extrinsic horizontal curvature and extrinsic vertical curvature with function $\mu$. We also obtain some results on torqued, recurrent and concurrent vector fields on Kaehler submersion.
\begin{theorem}\label{T3} 
	Let $\pi$ be a Kaehler submersion admitting an almost Yamabe soliton
	$(M_1, g_1, \eta, \mu)$ such that the potential vector field $\eta$ is vertical and the horizontal distribution is parallel. Then, the extrinsic horizontal scalar curvature $\left.\tau\right|_{\mathcal{H}}$ satisfies
	$$
	\left.\tau\right|_{\mathcal{H}}-\mu=0.
	$$
\end{theorem}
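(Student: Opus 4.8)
The plan is to evaluate the almost Yamabe soliton equation (\ref{ays}) on horizontal vector fields and to use the parallelism of the horizontal distribution to annihilate the Lie-derivative term. The first move is to invoke Lemma~\ref{lfm}: since $\mathcal H$ is parallel with respect to $\nabla$, both O'Neill tensors vanish, so that $\mathcal T=\mathcal A=0$. This is the structural fact that drives the entire argument.

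Next I would take horizontal vector fields $X,Y$ on $M_1$ and write (\ref{ays}) restricted to $\mathcal H$ as
\[
\frac{1}{2}\bigl\{g_1(\nabla_X\eta,Y)+g_1(\nabla_Y\eta,X)\bigr\}=(\tau-\mu)\,g_1(X,Y).
\]
Because the potential field $\eta$ is vertical, I would expand $\nabla_X\eta$ through (\ref{nxv}); with $\mathcal A=0$ this collapses to $\nabla_X\eta=\mathcal V\nabla_X\eta$, so $\nabla_X\eta$ is again vertical. Pairing a vertical vector against the horizontal $Y$ gives $g_1(\nabla_X\eta,Y)=0$, and symmetrically $g_1(\nabla_Y\eta,X)=0$; hence the whole left-hand side vanishes.

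With the left-hand side gone, the soliton equation reduces to $(\tau-\mu)\,g_1(X,Y)=0$ along $\mathcal H$. To pass from the ambient scalar curvature $\tau$ to the extrinsic horizontal scalar curvature $\left.\tau\right|_{\mathcal H}$, I would invoke the decomposition (\ref{mtau}) together with $\mathcal T=\mathcal A=0$: these cause the mixed and vertical curvature contributions to split off, so that the curvature term actually governing the horizontal restriction of (\ref{ays}) is precisely $\left.\tau\right|_{\mathcal H}$. Choosing $X=Y$ of unit horizontal length and cancelling the metric factor then yields $\left.\tau\right|_{\mathcal H}-\mu=0$.

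The step I expect to be most delicate is this last identification of $\left.\tau\right|_{\mathcal H}$: one must track carefully, through (\ref{mtau}) and the simultaneous vanishing of $\mathcal T$ and $\mathcal A$ furnished by Lemma~\ref{lfm}, that it is the horizontal rather than the full scalar curvature that survives in the restricted equation. By contrast, the vanishing of the Lie-derivative term is immediate once (\ref{nxv}) is combined with $\mathcal A=0$, and should cause no trouble.
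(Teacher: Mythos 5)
Your proposal is correct and is essentially the paper's own proof: restrict (\ref{ays}) to horizontal fields, use the verticality of $\eta$ together with $\mathcal{A}=0$ (Lemma~\ref{lfm}, i.e.\ parallelism of $\mathcal{H}$) to annihilate the Lie-derivative side — the paper writes this as $g_1(\mathcal{A}_X\eta,Y)+g_1(\mathcal{A}_Y\eta,X)=0$, citing (\ref{nxy}) where your (\ref{nxv}) is actually the apt formula — and then cancel the metric factor. On the step you flag as delicate, be aware that the paper offers no justification at all: it simply writes $\left.\tau\right|_{\mathcal{H}}$ in place of $\tau$ the moment the soliton equation is restricted to horizontal vectors, and your proposed repair via (\ref{mtau}) would not close this either, since with $\mathcal{T}=0$ that formula gives $\tau=\tau^{\prime}\circ\pi+\hat{\tau}=\left.\tau\right|_{\mathcal{H}}+\hat{\tau}$, so the restricted equation yields $\left.\tau\right|_{\mathcal{H}}+\hat{\tau}-\mu=0$ with a residual $\hat{\tau}$ — a gap your argument shares with, and indeed inherits from, the paper itself.
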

\begin{proof} Since the total space is an almost Yamabe soliton, then from (\ref{ays}), we get
$$
\frac{1}{2}\left\{g_1\left(\nabla_{X} \eta, Y\right)+g_1\left(\nabla_{Y} \eta, X\right)\right\}=(\tau-\mu) g_1(X, Y)
$$
for any horizontal vectors $X, Y$ on $M_1$. Considering (\ref{nxy}) in the above equation, we have
$$
\frac{1}{2}\left\{g_1\left(\mathcal{A}_{X} \eta, Y\right)+g_1\left(\mathcal{A}_{Y} \eta,X\right)\right\}=(\tau|_{\mathcal{H}}-\mu) g_1(X, Y).
$$
Since the horizontal distribution is parallel, therefore in view of the properties of the tensor field $\mathcal{A}$ above equality in turn yields 
$$
\left(\left.\tau\right|_{\mathcal{H}}-\mu\right) g_1(X, Y)=0,
$$
for any horizontal vector fields $X, Y$ on $M_1$, which proves our claim.
\end{proof}

As an immediate consequence of Theorem \ref{T3} we conclude the following observations;

\begin{corollary}\label{C2} 
	Let $\pi:(M_1, g_1, J_1)\to( M_{2}, g_{2}, J_{2})$ be a Kaehler submersion admitting an almost Yamabe soliton $(M_1,g,\eta,\mu)$ such that $\eta$ is vertical. If the horizontal distribution is parallel then the followings are hold;
\begin{itemize}
\item[(i)] ($M_1,g_1,\eta,\mu)$ is shrinking if and only if the extrinsic horizontal scalar curvature is positive.
\item[(ii)] ($M_1,g_1,\eta,\mu)$ is expanding if and only if the extrinsic horizontal scalar curvature is negative.
\item[(iii)]  $(M_1,g_1,\eta,\mu)$ is steady if and only if the extrinsic horizontal scalar curvature is zero.
\end{itemize}
\end{corollary}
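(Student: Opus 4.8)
The plan is to derive all three equivalences directly from Theorem \ref{T3}, since the hypotheses of the corollary (Kaehler submersion, almost Yamabe soliton $(M_1,g_1,\eta,\mu)$ with vertical potential field $\eta$, and parallel horizontal distribution) are exactly those of that theorem. First I would invoke Theorem \ref{T3} to obtain the single scalar identity
$$
\left.\tau\right|_{\mathcal{H}}=\mu
$$
on $M_1$. This is the only geometric input required; everything that follows is a matter of reading off the sign conventions built into the definition of an almost Yamabe soliton.

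Next I would recall from the definition following (\ref{ays}) that the soliton is classified as shrinking, steady, or expanding according to whether $\mu>0$, $\mu=0$, or $\mu<0$, respectively. Substituting the identity $\left.\tau\right|_{\mathcal{H}}=\mu$ into each of these three conditions converts a statement about $\mu$ into the corresponding statement about the extrinsic horizontal scalar curvature: $\mu>0 \iff \left.\tau\right|_{\mathcal{H}}>0$ gives (i), $\mu<0 \iff \left.\tau\right|_{\mathcal{H}}<0$ gives (ii), and $\mu=0 \iff \left.\tau\right|_{\mathcal{H}}=0$ gives (iii). Because the underlying relation is a genuine equality (not merely an inequality), each equivalence holds in both directions, which is precisely what the ``if and only if'' in each item demands.

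There is essentially no analytic or computational obstacle here; the content lies entirely in Theorem \ref{T3}, and the corollary is a purely definitional repackaging of its conclusion. The only point deserving a moment's care is to confirm that the hypotheses transfer verbatim — in particular that $\eta$ being vertical and $\mathcal{H}$ being parallel are assumed in both statements — so that the identity $\left.\tau\right|_{\mathcal{H}}=\mu$ is legitimately available before the trichotomy is applied. Once that is noted, the three equivalences follow by inspection.
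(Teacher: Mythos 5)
Your proof is correct and follows exactly the route the paper intends: the paper states this corollary as an immediate consequence of Theorem \ref{T3}, i.e.\ the identity $\left.\tau\right|_{\mathcal{H}}=\mu$ combined with the sign convention ($\mu>0$ shrinking, $\mu<0$ expanding, $\mu=0$ steady) from the definition of an almost Yamabe soliton. Your additional check that the hypotheses of the corollary transfer verbatim to those of Theorem \ref{T3} is the only point of substance, and you handle it correctly.
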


\begin{theorem}\label{T4} 
	Let $\pi$ be a Kaehler submersion admitting an almost Yamabe soliton
	$(M_1, g_1, \eta, \mu)$ such that the potential vector field $\eta$ is horizontal and the horizontal distribution is parallel. Then, the extrinsic vertical scalar curvature $\hat{\tau}$ satisfies $$\hat{\tau}-\mu=0.$$
\end{theorem}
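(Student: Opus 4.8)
The plan is to mirror the argument of Theorem \ref{T3}, trading the horizontal test vectors for vertical ones and the tensor $\mathcal{A}$ for $\mathcal{T}$. First I would feed a pair of vertical vector fields $U, V$ on $M_1$ into the defining relation (\ref{ays}), expanding the Lie derivative so that
$$
\frac{1}{2}\left\{g_1(\nabla_U \eta, V) + g_1(\nabla_V \eta, U)\right\} = (\tau - \mu)\, g_1(U, V).
$$
Since $\eta$ is horizontal while $U, V$ are vertical, the relevant decomposition is now (\ref{nvx}), namely $\nabla_U \eta = \mathcal{T}_U \eta + \mathcal{H}\nabla_U \eta$. Because $\mathcal{T}_U$ reverses the two distributions, $\mathcal{T}_U\eta$ is vertical and $\mathcal{H}\nabla_U\eta$ is horizontal; pairing against the vertical field $V$ annihilates the horizontal piece and leaves only the $\mathcal{T}$-terms on the left, against the extrinsic vertical scalar curvature on the right:
$$
\frac{1}{2}\left\{g_1(\mathcal{T}_U \eta, V) + g_1(\mathcal{T}_V \eta, U)\right\} = (\hat{\tau} - \mu)\, g_1(U, V).
$$

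Next I would invoke Lemma \ref{lfm}: the hypothesis that the horizontal distribution is parallel forces both fundamental tensors to vanish, in particular $\mathcal{T} \equiv 0$, whence $\mathcal{T}_U \eta = \mathcal{T}_V \eta = 0$ and the entire left-hand side collapses. This yields $(\hat{\tau} - \mu)\, g_1(U, V) = 0$ for all vertical $U, V$, and since $g_1$ restricts to a non-degenerate metric on the vertical distribution, I conclude $\hat{\tau} - \mu = 0$, as claimed.

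The step that demands the most care — the exact analog of the $\tau \mapsto \tau|_{\mathcal{H}}$ passage in Theorem \ref{T3} — is justifying that, once the identity is restricted to vertical directions and the $\mathcal{T}$-contribution is removed, the quantity multiplying $g_1(U,V)$ is genuinely the \emph{extrinsic vertical} scalar curvature $\hat{\tau}$ rather than the full scalar curvature $\tau$ of $M_1$. I would anchor this in the fiber-curvature relation (\ref{uvfw}): with $\mathcal{T} = 0$ the ambient and intrinsic curvatures agree on vertical arguments, $R(U,V,F,W) = \hat{R}(U,V,F,W)$, so the partial trace of the curvature over a vertical orthonormal frame coincides with $\hat{\tau}$. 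Consistency with (\ref{mtau}) is then automatic, since $\|\mathcal{T}\|^{2} = 0$ reduces that identity to $\tau = (\tau'\circ\pi) + \hat{\tau}$.
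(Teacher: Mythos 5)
Your proof follows the paper's argument essentially verbatim: restrict the soliton equation (\ref{ays}) to vertical fields $U,V$, decompose $\nabla_U\eta$ via (\ref{nvx}) so only the $\mathcal{T}$-terms survive, and then kill those terms with Lemma \ref{lfm} to force $(\hat{\tau}-\mu)\,g_1(U,V)=0$. If anything, you are more careful than the paper at the one delicate point --- the replacement of the full scalar curvature $\tau$ in (\ref{ays}) by the vertical quantity $\hat{\tau}$ --- which the paper performs silently and you at least attempt to justify via (\ref{uvfw}) and (\ref{mtau}).
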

\begin{proof} Since $(M_1,g_1,J_1)$ is an almost Yamabe soliton and the potential vector field $\eta$ is horizontal vector field then by using (\ref{nvx}), we  arrive at,
$$\frac{1}{2}\{g_1(\mathcal{T}_U\eta,V)+g_1(\mathcal{T}_V\eta,U)\}=(\hat{\tau}-\mu)g(U,V)$$
for any vertical vector field U, V on $M$. Since the horizontal distribution is parallel, by using Lemma \ref{lfm}, the extrinsic vertical scalar curvature satisfies $(\hat{\tau}-\mu)=0.$\\
\end{proof}

In view of the relation of extrinsic vertical scalar curvature $\hat\tau$ with $\mu$ we mention the following conclusion;
\begin{corollary}\label{C3}
	Let $\pi$ be a Kaehler submersion admitting an almost Yamabe soliton
	$(M_1, g_1, \eta, \mu)$ such that the potential vector field $\eta$ is horizontal and the horizontal distribution is parallel. Then,
	\begin{itemize}
	\item[(i)] $(M_1,g_1,\eta, \mu)$ is shrinking if and only if the fibre has  positive scalar curvature.

	\item[(ii)] $(M_1,g_1,\eta, \mu)$ is expanding if and only if the fibre has  negative scalar curvature.
	
	\item[(iii)] $(M_1,g_1,\eta, \mu)$ is steady if and only if the fibre has zero scalar curvature.
	\end{itemize}
	
\end{corollary}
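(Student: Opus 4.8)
The plan is to obtain this corollary as a purely formal consequence of Theorem \ref{T4}, which under exactly the same hypotheses (a Kaehler submersion admitting the almost Yamabe soliton $(M_1,g_1,\eta,\mu)$ with $\eta$ horizontal and the horizontal distribution parallel) already establishes the identity $\hat{\tau}-\mu=0$, where $\hat{\tau}$ is the extrinsic vertical scalar curvature, that is, the scalar curvature of a fibre. Thus all the geometric work has been done, and the three equivalences reduce to reading off the identity $\mu=\hat{\tau}$ against the sign convention in the definition of the soliton.

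First I would invoke Theorem \ref{T4} to record $\mu=\hat{\tau}$. Next I would recall from the definition accompanying (\ref{ays}) that the soliton is shrinking, steady, or expanding precisely according to $\mu>0$, $\mu=0$, or $\mu<0$. Substituting $\mu=\hat{\tau}$ into each of these three conditions turns them into $\hat{\tau}>0$, $\hat{\tau}=0$, and $\hat{\tau}<0$, respectively. Because $\mu$ and $\hat{\tau}$ are equal, the sign of either one determines the sign of the other, so each implication is reversible; this supplies the ``if and only if'' in (i)--(iii) without further argument. Finally, since $\hat{\tau}$ is the scalar curvature of the fibre, the conditions $\hat{\tau}>0$, $\hat{\tau}=0$, $\hat{\tau}<0$ are literally the statements that the fibre has positive, zero, or negative scalar curvature, which completes the three cases.

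I do not expect any genuine obstacle here, as the corollary carries no new analytic content beyond Theorem \ref{T4}; the single point requiring care is bookkeeping the sign convention, ensuring that shrinking is matched with $\mu>0$ (hence $\hat{\tau}>0$) and expanding with $\mu<0$ (hence $\hat{\tau}<0$), so that cases (i) and (ii) are not inadvertently interchanged.
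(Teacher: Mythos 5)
Your proposal is correct and matches the paper's own route exactly: the corollary is presented there as an immediate consequence of Theorem \ref{T4}, obtained by substituting the identity $\hat{\tau}-\mu=0$ into the sign convention of (\ref{ays}) (shrinking $\mu>0$, expanding $\mu<0$, steady $\mu=0$), with $\hat{\tau}$ being the fibre's scalar curvature by (\ref{mtau}). Your attention to not interchanging the shrinking/expanding cases is precisely the only point of care, and you handled it correctly.
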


\begin{definition}\label{D1} 
	A nowhere vanishing vector field $\eta$ on a Riemannian manifold $M$ is called torse-forming if
	\begin{align}
	\nabla_{W}\eta=\varphi W+\alpha(W) \eta, \label{tv}
	\end{align}
	where $\varphi$ is a function, $\alpha$ is a 1-form. The vector field $\eta$ is called
	concircular, if the 1-form $\alpha$ vanishes identically.
	The vector field $\eta$ is called concurrent if the 1 -form $\alpha$ vanishes
	identically and the function $\varphi=1$. The vector field $\eta$ is called 
	recurrent if the function $\varphi=0$ \cite{a5.1}, \cite{a5.2}. 

	Finally if $\varphi=\alpha=0$, then the vector field $\eta$ is called a parallel vector field. The nowhere zero vector field $\eta$ is called a torqued vector field if it satisfies
		\begin{align}
	\nabla_{W} \eta=\varphi W+\alpha(W) \eta, \quad \alpha(\eta)=0, \label{tv1}
		\end{align}
	where the function $\varphi$ is called the torqued function and 1-form $\alpha$ is called the torqued form of $\eta ;$ see $[8]$.
	
\end{definition}

\begin{theorem}\label{T5}
	Let $\pi:\left(M_{1}, g_{1}, J_{1}\right) \rightarrow\left(M_{2}, g_{2}, J_{2}\right)$ be a Kaehler submersion. Then, 
	\begin{itemize}
\item[(i)] For any torqued vector field $\eta$ on $\Gamma\left(\operatorname{ker} \pi_{*}\right)^{\perp}, J_{1} \eta$ is not a torqued vector field on $\Gamma{(ker \pi_*)}^\bot .$
\item[(ii)] For torqued vector field $\eta \in \Gamma\left(k e r \pi_{*}\right)^{\perp},[\eta, J \eta]$ does not vanish.
\end{itemize}
\end{theorem}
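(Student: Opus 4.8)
The plan is to lean on the two structural facts that a Kaehler submersion hands us: the total space is Kaehler, so the almost complex structure is parallel, $\nabla J_1=0$; and $J_1$ is $g_1$-compatible, so $g_1(\eta,J_1\eta)=-g_1(J_1\eta,\eta)=-g_1(\eta,J_1\eta)$ forces $g_1(\eta,J_1\eta)=0$, i.e. $\eta$ and $J_1\eta$ are pointwise orthogonal and hence linearly independent wherever $\eta$ is nonzero. Both are horizontal since the horizontal distribution of a(n almost) Hermitian submersion is $J_1$-invariant. Starting from the torqued equation (\ref{tv1}) for $\eta$, namely $\nabla_W\eta=\varphi W+\alpha(W)\eta$ with $\alpha(\eta)=0$, parallelism of $J_1$ lets me differentiate through the complex structure to obtain $\nabla_W(J_1\eta)=J_1\nabla_W\eta=\varphi\,J_1W+\alpha(W)\,J_1\eta$ for every vector field $W$. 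This one identity is the engine for both parts.

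For part (i) I would argue by contradiction. Suppose $J_1\eta$ were torqued, say $\nabla_W(J_1\eta)=\psi W+\beta(W)\,J_1\eta$ with $\beta(J_1\eta)=0$. Subtracting this from the identity above gives $\varphi\,J_1W-\psi W=(\beta(W)-\alpha(W))\,J_1\eta$ for all $W$. I then evaluate at the particular field $W=J_1\eta$; using $J_1^2=-I$ so that $J_1W=-\eta$, together with $\beta(J_1\eta)=0$, this collapses to $-\varphi\,\eta=(\psi-\alpha(J_1\eta))\,J_1\eta$. Since $\eta$ and $J_1\eta$ are orthogonal and nonzero, both sides must vanish, forcing $\varphi=0$. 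But a genuine torqued vector field has nonvanishing torqued function $\varphi$ (otherwise it is merely recurrent, per Definition \ref{D1}), so this is the contradiction sought. A pleasant feature of this route is that it is dimension-free: I never need a horizontal vector outside $\mathrm{span}\{\eta,J_1\eta\}$.

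For part (ii) I would compute the bracket directly from the torsion-freeness of the Levi-Civita connection, $[\eta,J_1\eta]=\nabla_\eta(J_1\eta)-\nabla_{J_1\eta}\eta$. The first term equals $J_1\nabla_\eta\eta=\varphi\,J_1\eta$ because $\alpha(\eta)=0$, while the second is $\varphi\,J_1\eta+\alpha(J_1\eta)\eta$ directly from (\ref{tv1}); subtracting leaves the clean expression $[\eta,J_1\eta]=-\alpha(J_1\eta)\,\eta$. Hence the bracket vanishes precisely when $\alpha(J_1\eta)=0$, and since $\eta$ is nowhere zero the entire content of (ii) reduces to showing $\alpha(J_1\eta)\neq0$. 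I expect this last point to be the main obstacle, since the torqued hypothesis only imposes $\alpha(\eta)=0$ and a priori says nothing about the value of $\alpha$ on $J_1\eta$; one must therefore invoke genuineness of the torqued structure ($\alpha\not\equiv0$, $\varphi\neq0$) to exclude the degenerate case in which the torqued form annihilates the whole $J_1$-invariant plane generated by $\eta$. Converting ``$\alpha$ kills both $\eta$ and $J_1\eta$'' into an outright contradiction is the delicate step, and is where I would concentrate the analysis.
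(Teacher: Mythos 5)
Your route is the same as the paper's: both arguments differentiate the torqued equation through the parallel complex structure to get $\nabla_W(J_1\eta)=\varphi\, J_1W+\alpha(W)J_1\eta$, and both compute $[\eta,J_1\eta]$ from torsion-freeness of the Levi-Civita connection. (The paper phrases the first step through the submersion decomposition (\ref{nxy}) together with $\mathcal{A}=0$, which is why its displays carry $\mathcal{H}\nabla$; since you work with the full connection and the torqued equation directly, you never need to invoke $\mathcal{A}=0$ at all, and the two computations agree.) For part (i) your argument is actually tighter than the paper's: the paper derives its display (\ref{nxjn}) and simply asserts that it ``implies that $J_1\eta$ is never a torqued vector field,'' whereas you close the implication by contradiction, evaluating at $W=J_1\eta$ and using $g_1(\eta,J_1\eta)=0$ to force $\varphi=0$. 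Note that both versions tacitly exclude $\varphi=0$: under Definition \ref{D1} a parallel field ($\varphi=\alpha=0$) is torqued, and then $J_1\eta$ is parallel, hence torqued as well, so (i) would fail; you at least state this exclusion explicitly.

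For part (ii) you reach $[\eta,J_1\eta]=-\alpha(J_1\eta)\,\eta$ and correctly observe that concluding non-vanishing requires $\alpha(J_1\eta)\neq 0$, which the torqued hypothesis (only $\alpha(\eta)=0$) does not provide. The substantive point of the comparison is this: the step you could not close is not closed in the paper either. The paper's entire proof of (ii) is to take $X=\eta$ in (\ref{nxjn}), subtract (\ref{nxjn1}), and declare the result — i.e., it silently assumes $\alpha(J_1\eta)\neq 0$. Your formula in fact shows the assertion is false as literally stated: a concircular field ($\alpha\equiv 0$, $\varphi$ a nonzero function) is torqued according to Definition \ref{D1}, and for it $[\eta,J_1\eta]=0$; this is consistent with Corollary \ref{C5}(ii), where the bracket of a concurrent field vanishes. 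So what you flagged as the ``delicate step'' is a genuine gap, but it is the paper's gap rather than a missing idea on your side: Theorem \ref{T5}(ii) needs an additional hypothesis (essentially that the torqued form $\alpha$ does not annihilate $J_1\eta$) for the conclusion to hold, and no argument avoiding that hypothesis exists in the paper.
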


\begin{proof} Since $M_{1}$ is a Kaehler manifold, then we have
$$
\nabla_{X}^{1} J_{1} \eta=J_{1} \nabla_{X}^{1} \eta,~ \text{for~any} \quad X \in \Gamma\left(k e r \pi_{*}\right)^{\perp},$$
where $\eta$ is a torqued vector field which belongs to $\Gamma\left(\operatorname{ker} \pi_{*}\right)^{\perp}$ and $\nabla^1$ is a Levi-Civita connection on $M_1.$ 
Using (\ref{nxy}), we have
$$
\nabla_{X}^{1} J_{1} \eta=J_{1}\left(\mathcal{H} \nabla_{X}^{1} \eta+\mathcal{A}_{X} \eta\right).
$$
From (\ref{tv1}) and $\mathcal{A}=0$ we arrive at
\begin{align}
\nabla_{X}^{1} J_{1} \eta=J_{1}\left(\mathcal{H} \nabla_{X}^{1} \eta\right)=\varphi J_1 X+\alpha(X)J_1\eta \label{nxjn}
\end{align}
which implies that $J_{1} \eta$ is never a torqued vector field. We also have
\begin{align}
\mathcal{H} \nabla_{J_{1} \eta}^{1} \eta=\varphi J_1 \eta+\alpha(J_1\eta)\eta. \label{nxjn1}
\end{align}
Now, by taking $\eta$ instead of $X$ in (\ref{nxjn}) and taking the difference with (\ref{nxjn1}), we obtain $(ii)$.
\end{proof}

In particular, if we consider $\eta$ as a recurrent or concurrent vector field then we have the following corollaries;

\begin{corollary}\label{C4}
	Let $\pi:\left(M_{1}, g_{1}, J_{1}\right) \rightarrow\left(M_{2}, g_{2}, J_{2}\right)$ be a Kähler submersion. Then, 
	\begin{itemize}
\item[(i)] For a recurrent vector field $\eta$ on $\Gamma\left(\operatorname{ker} \pi_{*}\right)^{\perp}, J_{1} \eta$ is also a recurrent vector field on $\left(\operatorname{ker} \pi_{*}\right)^{\perp} .$
\item[(ii)] For recurrent vector field $\eta \in \Gamma\left(k e r \pi_{*}\right)^{\perp}, [\eta, J \eta]$ vanishes.
	\end{itemize}
\end{corollary}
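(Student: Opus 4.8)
The plan is to specialize the computation behind Theorem~\ref{T5} to the recurrent case $\varphi=0$ and to read off both assertions from the resulting identities, using in addition the torsion-freeness of the Levi-Civita connection $\N^1$ for part (ii). Throughout I keep the standing hypotheses of the proof of Theorem~\ref{T5}: $\eta\in\Gamma(\ker\pi_*)^\perp$ and $\A=0$, so that by (\ref{nxy}) horizontal covariant derivatives coincide with full covariant derivatives and $J_1\eta$ remains horizontal. For part (i) I would argue exactly as in Theorem~\ref{T5}. Since $M_1$ is Kaehler, $\N^1_X J_1\eta=J_1\N^1_X\eta$ for every horizontal $X$; putting $\varphi=0$ in the torse-forming relation (\ref{tv}) gives $\N^1_X\eta=\alpha(X)\eta$, and substituting this into the Kaehler identity yields $\N^1_X J_1\eta=\alpha(X)J_1\eta$. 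This is precisely (\ref{tv}) for $J_1\eta$ with $\varphi=0$ and the \emph{same} recurrence form $\alpha$, so $J_1\eta$ is recurrent; equivalently one just sets $\varphi=0$ in (\ref{nxjn}). This part is immediate and carries no real difficulty.

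For part (ii) I would compute the bracket through the torsion-free identity $[\eta,J_1\eta]=\N^1_\eta J_1\eta-\N^1_{J_1\eta}\eta$. Taking $X=\eta$ in the relation established in part (i) gives $\N^1_\eta J_1\eta=\alpha(\eta)J_1\eta$, while evaluating the recurrence relation for $\eta$ at $J_1\eta$ gives $\N^1_{J_1\eta}\eta=\alpha(J_1\eta)\eta$. Subtracting these — the same ``difference of (\ref{nxjn}) and (\ref{nxjn1})'' step used for Theorem~\ref{T5} — produces
\[
[\eta,J_1\eta]=\alpha(\eta)\,J_1\eta-\alpha(J_1\eta)\,\eta .
\]
Because the almost Hermitian compatibility forces $g_1(\eta,J_1\eta)=0$, the fields $\eta$ and $J_1\eta$ are pointwise linearly independent, so this bracket vanishes precisely when $\alpha(\eta)=\alpha(J_1\eta)=0$.

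Establishing these two scalar identities is the crux of the argument and the step I expect to be the main obstacle: unlike the torqued case of Theorem~\ref{T5}, where $\alpha(\eta)=0$ is built into the defining relation (\ref{tv1}), recurrence by itself does not force $\alpha(\eta)=0$. I would try to extract them from the compatibility of $\alpha$ with the metric. Differentiating $g_1(\eta,\eta)$ along a horizontal $X$ and using $\N^1_X\eta=\alpha(X)\eta$ gives $\alpha(X)=\tfrac12\,X\log\|\eta\|^2$, so that $\alpha(\eta)=\alpha(J_1\eta)=0$ would follow if $\|\eta\|$ is constant along both $\eta$ and $J_1\eta$ (in particular, if $\eta$ has constant length, then $\alpha\equiv0$ and $\eta$ is in fact parallel, whence the bracket vanishes trivially). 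This is exactly the point at which either an additional normalization hypothesis on $\eta$, or a sharper use of the Kaehler identity together with $\A=0$, appears to be needed to force $[\eta,J_1\eta]$ to vanish identically, and I would focus my effort there.
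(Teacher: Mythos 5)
Your argument is exactly the paper's own route: the paper offers no separate proof of Corollary~\ref{C4}, presenting it as the $\varphi=0$ specialization of the computation in Theorem~\ref{T5}, i.e.\ of equations (\ref{nxjn}) and (\ref{nxjn1}). Part (i) as you give it ($\nabla^1_X J_1\eta=J_1\nabla^1_X\eta=\alpha(X)J_1\eta$, so $J_1\eta$ is recurrent with the same recurrence form $\alpha$) is complete and is precisely the intended proof.

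The obstacle you isolate in part (ii) is real, but it is a defect of the paper's statement rather than of your execution. The paper's intended argument is the very difference you compute: with $\varphi=0$, taking $X=\eta$ in (\ref{nxjn}) and subtracting (\ref{nxjn1}) yields $[\eta,J_1\eta]=\alpha(\eta)J_1\eta-\alpha(J_1\eta)\eta$, and since $g_1(\eta,J_1\eta)=0$ makes $\eta$ and $J_1\eta$ pointwise independent, vanishing is equivalent to $\alpha(\eta)=\alpha(J_1\eta)=0$. Nothing in the definition of recurrence supplies these identities: in Theorem~\ref{T5} the condition $\alpha(\eta)=0$ is built into the torqued definition (\ref{tv1}), and in Corollary~\ref{C5} concurrence gives $\alpha\equiv0$, but a recurrent field carries no constraint on $\alpha$ at all; the paper simply asserts the vanishing. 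In fact part (ii) is false as stated: for the Kaehler submersion $\mathbb{C}^2\to\mathbb{C}$, $(z_1,z_2)\mapsto z_1$, with $z_1=x_1+iy_1$, the horizontal field $\eta=f(x_1)\,\partial_{x_1}$ (with $f>0$ nonconstant) is recurrent with $\alpha=d\log f$, yet $[\eta,J_1\eta]=ff'\,\partial_{y_1}\neq0$. So your diagnosis is the correct conclusion: part (ii) holds only under an additional normalization such as $\|\eta\|$ constant (which by your formula $\alpha(W)=\tfrac12 W\log\|\eta\|^2$ forces $\alpha\equiv0$, i.e.\ $\eta$ parallel), and no argument in the paper supplies such a hypothesis.
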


For the concurrent vector field, we have the following result from \cite{a12}:
\begin{corollary}\label{C5}
	Let $\pi:\left(M_{1}, g_{1}, J_{1}\right) \rightarrow\left(M_{2}, g_{2}, J_{2}\right)$ be a Kähler submersion. Then, 
	\begin{itemize}
	\item[(i)] For a concurrent vector field $\eta$ on $\Gamma\left(\operatorname{ker} \pi_{*}\right)^{\perp}, J_{1} \eta$ is not a concurrent vector field on $\left(\operatorname{ker} \pi_{*}\right)^{\perp} .$
\item[(ii)] For concurrent vector field $\eta \in \Gamma\left(k e r \pi_{*}\right)^{\perp},[\eta, J \eta]$ vanishes.
\end{itemize}
\end{corollary}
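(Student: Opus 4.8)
The plan is to specialize the computation from the proof of Theorem~\ref{T5} to a concurrent field, for which the torqued function equals $1$ and the torqued form $\alpha$ vanishes identically; thus (\ref{tv}) collapses to $\nabla_W^1\eta=W$ for every $W$. Since $\pi$ is a Kaehler submersion we have $\nabla^1 J_1=0$, so $\nabla_X^1 J_1\eta=J_1\nabla_X^1\eta$ for each horizontal $X$. Inserting the concurrent relation gives the key identity
\[
\nabla_X^1 J_1\eta=J_1 X,\qquad X\in\Gamma(\ker\pi_*)^{\perp}.
\]

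For part (i) I would compare this identity with the defining property of a concurrent field. A horizontal field $\zeta$ is concurrent exactly when $\nabla_X^1\zeta=X$ for all horizontal $X$; the key identity shows that $J_1\eta$ could satisfy this only if $J_1 X=X$ on all of $(\ker\pi_*)^{\perp}$, forcing $J_1=\mathrm{id}$ there and contradicting $J_1^2=-I$. Hence $J_1\eta$ is not a concurrent vector field.

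For part (ii) I would evaluate the bracket directly via $[\eta,J_1\eta]=\nabla_\eta^1 J_1\eta-\nabla_{J_1\eta}^1\eta$. Setting $X=\eta$ in the key identity yields $\nabla_\eta^1 J_1\eta=J_1\eta$, while the concurrent relation with $W=J_1\eta$ yields $\nabla_{J_1\eta}^1\eta=J_1\eta$; the two terms cancel, so $[\eta,J_1\eta]=0$. Equivalently, one may put $\alpha=0$ in (\ref{nxjn}) and (\ref{nxjn1}) and take their difference, exactly as in the proof of Theorem~\ref{T5}.

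There is no substantial obstacle here: the statement is a direct specialization of Theorem~\ref{T5} with $\varphi=1$ and $\alpha=0$, and indeed the contrast with the recurrent case in Corollary~\ref{C4} (where $\varphi=0$) makes the sign of $\varphi$ the only decisive ingredient. The single point demanding care is the horizontal--vertical bookkeeping: because $\eta$ and $X$ are horizontal and $\eta$ is concurrent, the relation $\nabla_X^1\eta=X$ forces the vertical O'Neill term $\mathcal{A}_X\eta$ to vanish, and $J_1 X$ stays horizontal since $J_1$ preserves the horizontal distribution of an almost Hermitian submersion; thus every computation remains inside $(\ker\pi_*)^{\perp}$. This is precisely the role played by the condition $\mathcal{A}=0$ in the proof of Theorem~\ref{T5}.
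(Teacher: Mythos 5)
Your proposal is correct and follows essentially the same route as the paper: the paper states Corollary~\ref{C5} as the specialization of Theorem~\ref{T5}'s computation (citing \cite{a12}), i.e.\ setting $\varphi=1$ and $\alpha=0$ in (\ref{nxjn}) and (\ref{nxjn1}), which is exactly what you do. Your added remark that the concurrent relation itself forces $\mathcal{A}_X\eta=0$ (so no separate integrability assumption is needed) is a valid and slightly cleaner justification of the step the paper labels ``$\mathcal{A}=0$.''
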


\begin{theorem}\label{T6} Let $\pi:\left(M_{1}, g_{1}, J_{1}\right) \rightarrow\left(M_{2}, g_{2}, J_{2}\right)$ be
	a Kähler submersion and $\eta$ be the concurrent potential vector field and $\mu$, a function on $M$. If the second fundamental form of the fibres are totally geodesic then the extrinsic vertical scalar curvature $\hat\tau$ is equal to $\mu+1.$ for $\eta\in \Gamma{(kaer \pi_*)}$
\end{theorem}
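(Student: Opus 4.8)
The plan is to pass to a single fibre, recognise it as an almost Yamabe soliton in its own right, and then read off the claim from the concurrency of $\eta$. Recall that, by Definition~\ref{D1}, a concurrent vector field satisfies $\alpha \equiv 0$ and $\varphi = 1$, so the potential field obeys $\nabla_W \eta = W$ for every vector field $W$ on $M_1$. Since $\eta$ is vertical and the fibres are totally geodesic, Theorem~\ref{T1} applies and tells us that each fibre is an almost Yamabe soliton carrying the same potential $\eta$ and the same function $\mu$, now with its own scalar curvature $\hat\tau$; that is, for vertical vector fields $U, V$ one has $\frac{1}{2}(\mathcal{L}_\eta \hat{g})(U,V) = (\hat\tau - \mu)\,g_1(U,V)$, where $\hat{g}$ denotes the metric induced on the fibre.

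First I would rewrite this fibre equation through the induced connection $\hat\nabla$, namely $\frac{1}{2}\{g_1(\hat\nabla_U \eta, V) + g_1(\hat\nabla_V \eta, U)\} = (\hat\tau - \mu)\,g_1(U,V)$. The next step is to compute $\hat\nabla_U \eta$. Since $\hat\nabla_U \eta = \mathcal{V}\nabla_U \eta$ by the definition $\hat\nabla_V W = \mathcal{V}\nabla_V W$ accompanying (\ref{nvw}), and since concurrency gives $\nabla_U \eta = U$ with $U$ already vertical, I obtain $\hat\nabla_U \eta = \mathcal{V} U = U$, and likewise $\hat\nabla_V \eta = V$.

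Substituting these into the fibre equation yields $\frac{1}{2}\{g_1(U,V) + g_1(V,U)\} = (\hat\tau - \mu)\,g_1(U,V)$, i.e. $g_1(U,V) = (\hat\tau - \mu)\,g_1(U,V)$. As $U, V$ range over all vertical vector fields and $g_1$ is non-degenerate on $\ker \pi_*$, I would cancel $g_1(U,V)$ to conclude $\hat\tau - \mu = 1$, which is exactly $\hat\tau = \mu + 1$.

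The step I expect to demand the most care is the appeal to Theorem~\ref{T1}, because it is crucial that the fibre equation be written with the fibre's scalar curvature $\hat\tau$ rather than the total scalar curvature $\tau$: working directly from (\ref{ays}) on $M_1$ with the concurrency relation would only deliver $\tau = \mu + 1$, and converting this to $\hat\tau$ via (\ref{mtau}) leaves the base contribution $\tau' \circ \pi$ behind. It is therefore the genuinely intrinsic nature of the fibre soliton, supplied by total geodesy through $\mathcal{T} = 0$, that makes the $\hat\tau$ in the conclusion correct, and I would make this reduction explicit before inserting $\hat\nabla_U \eta = U$.
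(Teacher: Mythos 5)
Your proposal is correct and follows essentially the same route as the paper: the paper re-derives the fibre soliton equation inline by restricting (\ref{ays}) to vertical fields and passing to $\hat\tau$ via $\mathcal{T}=0$ and (\ref{mtau}), which is exactly the reduction you obtain by citing Theorem \ref{T1}, and both arguments then finish identically by inserting the concurrency relation $\hat\nabla_U\eta=\mathcal{V}\nabla_U\eta=U$ and cancelling $g_1(U,V)$ to get $\hat\tau-\mu=1$. Your closing caveat about the base contribution $\tau'\circ\pi$ is well taken -- the paper's own passage from $\tau$ to $\hat\tau$ (in Theorem \ref{T1} and again here) silently discards that term -- so your proof inherits exactly the paper's convention, no more and no less.
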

\begin{proof} As $\left(M_{1}, g_{1}, J_{1}\right)$ be an almost Yamabe soliton with potential vector field $\eta$, then for any $U,V\in\Gamma(ker \pi_*)$ we have 

	\[\frac{1}{2}\{g_{1}(\nabla^1_U{\eta}, V)+g_{1}(\nabla_{V}^{1} \eta, U)\}=(\tau-\mu) g_{1}(U, V)\]
	Since $\mathcal T=0$, using (\ref{mtau}) we get
\[	\frac{1}{2}\{\hat g(\hat{\nabla}_{U} \eta, V)+\hat g (\hat{\nabla}_{V} \eta, U)\}=(\hat{\tau}-\mu)\hat g(U,V).\]
Since the potential vector field $\eta$ is concurrent vector field on $M_1$, with the help of (\ref{tv}) we conclude the theorem.
\end{proof}

In view of the relation between the extrinsic vertical scalar curvature $\hat\tau$ and $\mu$, we give the following result;\\

\begin{corollary}\label{C6}
	Let $\pi:\left(M_{1}, g_{1}, J_{1}\right) \rightarrow\left(M_{2}, g_{2}, J_{2}\right)$ be a Kähler submersion and $(M_{1}, g_{1})$ be an almost Yamabe soliton with the concurrent potential vector field $\eta$. If $\eta\in\Gamma(ker \pi_*)$ then we have
	\begin{itemize}
\item[(i)] $(M_1,g_1,\eta, \mu)$ is shrinking if and only if the extrinsic vertical scalar curvature $\hat\tau>1$.
	
\item[(ii)] $(M_1,g_1,\eta, \mu)$ is expanding if and only if  the extrinsic vertical scalar curvature $\hat\tau<1$.
	
	\item[(iii)] $(M_1,g_1,\eta, \mu)$ is steady if and only if  the extrinsic vertical scalar curvature $\hat\tau= 1$.\\
\end{itemize}
\end{corollary}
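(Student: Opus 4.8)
The plan is to read this off directly from Theorem~\ref{T6}. Under the hypotheses carried over from that theorem---$\pi$ a Kähler submersion, $(M_1,g_1)$ an almost Yamabe soliton whose concurrent potential field $\eta$ lies in $\Gamma(\ker\pi_*)$, and totally geodesic fibres---Theorem~\ref{T6} already establishes the single scalar identity
\[
\hat\tau = \mu + 1, \qquad \text{equivalently} \qquad \mu = \hat\tau - 1.
\]
So the entire corollary reduces to translating the sign of $\mu$ into a statement about $\hat\tau$ through this identity; there is no new geometric input required.

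First I would recall the trichotomy attached to the definition~(\ref{ays}): the almost Yamabe soliton $(M_1,g_1,\eta,\mu)$ is shrinking, steady, or expanding precisely when $\mu>0$, $\mu=0$, or $\mu<0$, respectively. Substituting $\mu = \hat\tau - 1$ then converts each of these conditions into the corresponding relation for the extrinsic vertical scalar curvature: $\mu>0 \Leftrightarrow \hat\tau>1$ yields (i), $\mu<0 \Leftrightarrow \hat\tau<1$ yields (ii), and $\mu=0 \Leftrightarrow \hat\tau=1$ yields (iii). Because each of these is a strict algebraic rearrangement of the identity $\mu=\hat\tau-1$, both directions of every biconditional (the ``if'' and the ``only if'') hold at once, which is exactly what the statements demand.

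The only point demanding any care---and it is bookkeeping rather than a genuine obstacle---is keeping the sign convention straight: in the definition following~(\ref{ays}), shrinking corresponds to $\mu>0$ (not $\mu<0$), so the inequality $\hat\tau>1$ must be matched to the shrinking case and $\hat\tau<1$ to the expanding case. Once one checks that the shift by $1$ lands on the correct side of each inequality, nothing further is needed: the corollary is an immediate consequence of Theorem~\ref{T6} combined with the definition of the three soliton regimes.
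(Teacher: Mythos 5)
Your proposal is correct and matches the paper's intent exactly: the paper states Corollary~\ref{C6} as an immediate consequence of Theorem~\ref{T6}, relying on precisely the substitution $\mu=\hat\tau-1$ combined with the paper's trichotomy (shrinking $\mu>0$, expanding $\mu<0$, steady $\mu=0$). Your attention to the sign convention and to the totally geodesic fibre hypothesis inherited from Theorem~\ref{T6} is, if anything, more careful than the paper itself.
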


\subsection*{\bf Data Availability Statement}
 Data sharing not applicable to this article as no datasets were generated or analysed during the current study.
 
%\subsection*{Acknowledgment}
%The authors are grateful to the referees for their constructive suggestions and comments.

% ------------------------------------------------------------------------
\end{document}